\DeclareMathOperator{\I}{\mathbbm{1}}%
\DeclareMathOperator{\Law}{Law}%
\def\E{\hskip.15ex\mathsf{E}\hskip.10ex}
\def\P{\mathsf{P}}
\def\eps{\varepsilon}
\def\phi{\varphi}
\newtheoremstyle{Assump}%
  {3pt}
  {3pt}
  {\itshape}
  {}
  {\bfseries}
  {.}
  {.5em}
  {\thmname{#1} \thmnumber{#2} \thmnote{\normalfont#3}}
\newtheorem{Theorem} {Theorem}[section]
\newtheorem{Lemma}[Theorem]{Lemma}
\newtheorem{Proposition}[Theorem]{Proposition}
\theoremstyle{definition}\newtheorem{Example}[Theorem]{Example}
\theoremstyle{definition}\newtheorem{Remark}[Theorem]{Remark}
\theoremstyle{definition}\newtheorem{Definition}[Theorem]{Definition}
\newenvironment{Assumption}[1]
  {\innercustomthm}
  {\endinnercustomthm}
\numberwithin{equation}{section}
\renewcommand{\ge}{\geqslant}
\renewcommand{\le}{\leqslant}
\newcommand{\wt}{\widetilde}
\newcommand{\N}{\mathbb{N}}
\newcommand{\R}{\mathbb{R}}
\newcommand{\Z}{\mathbb{Z}}
\newcommand{\B}{\mathcal{B}}
\newcommand{\cP}{\mathcal{P}}
\newcommand*\diff{\mathop{}\!\mathrm{d}}
\newcommand{\limes}{\lim\limits_{n \to \infty}}
\newcommand{\limessup}{\limsup\limits_{n \to \infty} }
\newcommand{\union}[2]{\overset{#2}{\bigcup\limits_{#1}}\;}
\newcommand{\be}{\begin{equation}}
\newcommand{\ee}{\end{equation}}
\newcommand{\ba}{\begin{aligned}}
\newcommand{\ea}{\end{aligned}}
\title{Asymptotic strong Feller property and local weak irreducibility via generalized couplings}
\author[1,$*$]{Oleg Butkovsky}
\author[2,$**$]{Fabrice Wunderlich}
\affil[1]{\small{Weierstrass Institute,

Mohrenstrasse 39, 10117 Berlin, Germany.\bigskip
}}
\affil[2]{\small {Technische Universit\"at Berlin,

Institut f\"ur Mathematik, MA 7-5, Fakult\"at II,

Strasse des 17.~Juni 136, 10623 Berlin, Germany. \bigskip
}}
\begin{document}
\maketitle
\renewcommand{\thefootnote}{*}
\footnotetext{Email: \texttt{oleg.butkovskiy@gmail.com}. Supported in part by DFG Research Unit FOR 2402 and ERC grant 683164.}

\renewcommand{\thefootnote}{**}
\footnotetext{Email: \texttt{fabrice.wunderlich@posteo.de}.   }

\begin{abstract}
In this short note we show how the asymptotic strong Feller property (ASF) and local weak irreducibility can be established via generalized couplings. We also prove that a stronger form of ASF together with local weak irreducibility implies uniqueness of an invariant measure. The latter result is optimal in a certain sense and complements some of the corresponding results of Hairer, Mattingly (2008).
\end{abstract}

\section{Introduction}
In this short note we show how the asymptotic strong Feller property (ASF) and local weak irreducibility can be established via generalized couplings. We also prove that a stronger form of ASF together with local weak irreducibility implies uniqueness of an invariant measure. The latter result is optimal in a certain sense and complements some of the corresponding results of \cite[Section~2]{HM11}, \cite[Section~2.1]{HM08}, and extends some of the ideas of \cite[Section 2]{BKS}.

A central question in the theory of Markov processes can be posed as follows: given a Markov semigroup determine whether it has a unique invariant measure. If the semigroup is strong Feller (this is typically the case for finite--dimensional Markov processes), then this problem is relatively easy to solve. For example, one way to get unique ergodicity for a strong Feller semigroup is just to verify a certain
accessibility condition.

The problem becomes much more difficult if the Markov semigroup is only Feller and not strong Feller (this might happen for infinite--dimensional Markov processes, e.g. stochastic delay equations or SPDEs). A breakthrough was achieved in a series of works by Hairer and Mattingly  \cite{HM}, \cite{HM08}, \cite{HM11}, where the notion of an \textit{asymptotically strong Feller} (ASF) Markov process was introduced. It turned out that if a Markov process has the ASF property, then any two of its ergodic invariant measures have disjoint support. This, in turn, implies that ASF together with a certain irreducibility condition yields unique ergodicity. However, verification of the ASF condition in practice might be quite involving and is usually based on Malliavin calculus techniques.

This short article has two goals. First, we provide an alternative way of establishing the ASF property based on the generalized couplings technique. We hope also that it might be useful in obtaining certain gradient-type bounds for SPDEs. Second, we show that a stronger version of the ASF property together with local weak irreducibility (a weaker condition than the one used in \cite{HM}) is sufficient for unique ergodicity. We also provide a way how this local weak irreducibility can be established via generalized couplings.
\medskip

\noindent \textbf{Acknowledgments}. This article is  based on the master  thesis of FW written at TU~Berlin under the supervision of OB. The authors are grateful to Michael Scheutzow for useful discussions. OB has received funding from the European Research Council
(ERC) under the European Union's Horizon 2020 research and innovation program (grant agreement No.~683164).

\section{Main results}

First, let us introduce some basic notation. Let $(E, d)$ be a Polish space equipped with the Borel $\sigma$-field $\mathcal{E}=\B(E)$. For $x\in E$, $R>0$ let $B_R(x):=\{z \in E\colon d(z,x)< R\}$ be the open ball of radius $R$ around $x$.  Denote by $\cP(E)$ the set of all probability measures on $(E, \mathcal{E})$. For $\mu, \nu \in \cP(E)$ let $\mathscr{C}(\mu, \nu)$ be the set of all \textit{couplings} between $\mu$ and $\nu$, i.e. probability measures on $(E \times E, \mathcal{E} \otimes \mathcal{E})$ with marginals $\mu$ and $\nu$. For $\mu, \nu \in \cP(E)$ and a measurable function $\rho: E\times E\to \R_+$ we put
\begin{equation}
W_\rho(\mu, \nu) \; := \; \inf_{\gamma\, \in\, \mathscr{C}(\mu, \nu)} \;  \int_{E \times E} \,  \rho(x,y)\; \gamma(\diff x, \diff y), \qquad \mu, \nu \in \cP(E). \label{2.1}
\end{equation}
Clearly, for $\rho=d$, the function $W_d$ is just the standard Wasserstein--$1$ (or Kantorovich) distance. If $\rho(x,y)=\I(x=y)$, then $W_\rho$ coincides with the \textit{total variation} distance $d_{TV}$. The latter can also be defined as follows
$$
d_{TV}(\mu, \nu)  \; :=  \; \sup_{A \, \in \, \mathcal{E}}  \;  |\mu(A) - \nu(A)|.
$$

A mapping $\rho: E \times E \to \R_+$ is called a \textit{pseudo-metric}, if it satisfies all characteristics of a metric without possibly the property that $\rho(x,y)=0$ implies $x=y$.
For a function $f\colon D\to\R$, where $D$ is an arbitrary domain, we will denote $\|f\|_{\infty}:=\sup_{x\in D}|f(x)|$.

For the convenience of the reader, all of our results will be stated in the continuous time  framework. Yet, they also hold for the discrete time setup. Consider a Markov transition function $\{P_t(x, A), x\in E, A\in\mathcal{E}\}_{t\in\R_+}$.  Recall the following concepts introduced in \cite{HM}.

\begin{Definition}[{\cite[Definition 3.1]{HM}}] An increasing sequence $(d_n)_{n\in\Z_+}$ of bounded, continuous pseudo-metrics on $E$ is called \textit{totally separating} if for every $x,y\in E$, $x\neq y$ one has $\limes d_n(x,y) = 1$.
\end{Definition}

\begin{Definition}[{\cite[Definition 3.8]{HM}}] We say that a Markov semigroup $(P_t)_{t\in \R_+}$ satisfies the \textit{Asymptotic Strong Feller Property} (ASF) if it is Feller and  for every $x \in E$ there exist a sequence of positive real numbers $(t_n)_{n\in\Z_+}$ and a totally separating sequence $(d_n)_{n\in\Z_+}$ of pseudo--metrics such that
$$
\inf_{U\, \in \,  \mathcal{U}_x} \; \limessup \; \sup_{y\, \in \, U}  \; W_{d_n}\left( P_{t_n}(x,\cdot) , P_{t_n}(y, \cdot)\right) \; = \;  0,
$$
where $\mathcal{U}_x:=\{ U \subseteq E\colon \, x \in U \text{ and } U \text{ open}\}$ denotes the collection of all open neighborhoods of $x$.
\end{Definition}

We refer to  \cite{HM}, \cite{HM08}, \cite{HM11} for further discussions of this notion. In particular, it was shown in \cite[Corollary 3.17]{HM} that ASF, together with a certain  irreducibility assumption, implies uniqueness of invariant probability measure. It was also shown there (\cite[Proposition 3.12]{HM}) that ASF follows from the following stronger property.

\begin{Definition}[see also {\cite[Proposition 3.12]{HM}}] \label{Def:ASF+} We say that a Markov semigroup $(P_t)_{t\in \R_+}$ satisfies the \textit{Asymptotic Strong Feller Plus Property} (ASF+) if it is Feller and there exist $x_0 \in E$, a non-decreasing sequence $(t_n)_{n\in\Z_+}$, a positive sequence $(\delta_n)_{n\in\Z_+}$ with $\delta_n \searrow 0$ as $n \to \infty$ and a non-decreasing function $F: [0,\infty) \to [0,\infty)$ such that for every $d$-Lipschitz continuous function $\varphi: E \to \R$ with Lipschitz constant $K>0$, we have
\begin{equation} \label{eq:ASF+}
\left| P_{t_n} \phi (x)  - P_{t_n} \phi (y) \right| \;   \le  \; d(x,y)  \;  F(d(x,x_0) \vee d(y,x_0) ) \; ( \, \norm{\phi}_\infty +  \delta_n K)
\end{equation}
for every $n \in \N$ and $x,y \in E$.
\end{Definition}

\begin{Remark} \label{rem:2.4}
It was shown in \cite[Remark 3.10]{HM} that ASF is an extension of the strong Feller property (i.e. any strong Feller semigroup satisfies ASF). Furthermore, \cite[Proposition 3.12]{HM} proved that ASF follows from ASF+. However, according to Example~\ref{ex:2} below, ASF+ is \textbf{not} an extension of the strong Feller property, and one can construct a strong Feller semigroup that does not satisfy ASF+. Nevertheless, the ASF+ property is quite useful, since it holds for many interesting Feller but not strong Feller semigroups.
\end{Remark}

As mentioned before, thanks to the results of \cite{HM}, to show unique ergodicity it is enough to establish ASF or ASF+ (together with some irreducibility-type conditions). However verifying this criteria in practice is usually  rather tedious (see, e.g., \cite[Proposition 4.2]{CGHV}) and involves Malliavin calculus techniques. Our first main result suggests a different strategy of verifying ASF+. This strategy is based on the generalized coupling method and develops some ideas of \cite[Section 2.2]{BKS}.

Consider the following assumption which is related to \cite[Assumption A]{BKS}.

\begin{Assumption}{\textbf{A1}}\label{A:1}
There exist non-decreasing functions $F_1, F_2\colon \R_+ \to \R_+$, a non-increasing function
$r\colon\R_+\to\R_+$ with $\lim_{t\to\infty} r(t)=0$ and $x_0 \in E$ such that for every $x,y \in E$ and $t \in \R_+$, there exist $E$-valued random variables $Y^{x,y}_t$ and $Z^{x,y}_t$ on a common probability space with the following properties
\begin{enumerate}
\item $\Law(Y^{x,y}_t)=P_t(y,\cdot)$ and
$$
d_{TV} \left(\Law(Z^{x,y}_t), P_t(x,\cdot)\right) \; \le \; F_1(d(x,x_0) \vee d(y,x_0))\;  d(x,y), \quad t\ge0.
$$
\item $\E d(Z^{x,y}_t, Y^{x,y}_t) \;  \le  \; F_2(d(x,x_0) \vee d(y,x_0)) \; r(t) \; d(x,y), \quad t\ge0$.
\end{enumerate}
\end{Assumption}

\begin{Theorem}\label{Thm:2.2}
If $(P_t)_{t\in \R_+}$ satisfies Assumption \ref{A:1}, then it also satisfies ASF+ with $F:=2F_1+F_2$ (and hence $(P_t)_{t\in  \R_+}$ is asymptotically strong Feller).
\end{Theorem}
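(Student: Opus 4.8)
The plan is to verify inequality~\eqref{eq:ASF+} directly by splitting the difference $P_t\varphi(x)-P_t\varphi(y)$ into a total-variation part, controlled by property~(1) of Assumption~\ref{A:1}, and a Lipschitz part, controlled by property~(2). Fix a bounded $d$-Lipschitz function $\varphi$ with Lipschitz constant $K$, points $x,y\in E$, and write $M:=d(x,x_0)\vee d(y,x_0)$. For each $t$ take the random variables $Y_t^{x,y},Z_t^{x,y}$ provided by Assumption~\ref{A:1}. Since $\Law(Y_t^{x,y})=P_t(y,\cdot)$ we have $P_t\varphi(y)=\E\varphi(Y_t^{x,y})$, and therefore
$$
P_t\varphi(x)-P_t\varphi(y)=\bigl(P_t\varphi(x)-\E\varphi(Z_t^{x,y})\bigr)+\bigl(\E\varphi(Z_t^{x,y})-\E\varphi(Y_t^{x,y})\bigr).
$$

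First I would estimate the two brackets separately. For the first one, writing $P_t\varphi(x)=\int_E\varphi\,\diff P_t(x,\cdot)$ and $\E\varphi(Z_t^{x,y})=\int_E\varphi\,\diff\Law(Z_t^{x,y})$, the elementary bound $|\int\varphi\,\diff\mu-\int\varphi\,\diff\nu|\le 2\norm{\varphi}_\infty\,d_{TV}(\mu,\nu)$ combined with property~(1) yields
$$
\bigl|P_t\varphi(x)-\E\varphi(Z_t^{x,y})\bigr|\le 2\norm{\varphi}_\infty\,d_{TV}\bigl(P_t(x,\cdot),\Law(Z_t^{x,y})\bigr)\le 2F_1(M)\,d(x,y)\,\norm{\varphi}_\infty.
$$
For the second bracket, the $K$-Lipschitz property of $\varphi$ and property~(2) give
$$
\bigl|\E\varphi(Z_t^{x,y})-\E\varphi(Y_t^{x,y})\bigr|\le K\,\E d(Z_t^{x,y},Y_t^{x,y})\le K F_2(M)\,r(t)\,d(x,y).
$$
Adding these and using the triangle inequality, I obtain for every $t\ge0$
$$
\bigl|P_t\varphi(x)-P_t\varphi(y)\bigr|\le d(x,y)\bigl(2F_1(M)\norm{\varphi}_\infty+F_2(M)\,r(t)\,K\bigr).
$$

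It then remains to package this into the ASF+ form. I would keep the same $x_0$, fix any non-decreasing sequence $t_n\to\infty$, set $F:=2F_1+F_2$ (non-decreasing since $F_1,F_2$ are), and take $\delta_n:=r(t_n)+2^{-n}$; the term $2^{-n}$ only guarantees strict positivity should $r$ vanish, and since $r$ is non-increasing with $r(t)\to0$ the sequence $\delta_n$ is positive and decreases to $0$. Evaluating the last display at $t=t_n$ and comparing the coefficients of $\norm{\varphi}_\infty$ and of $K$, the inequalities $2F_1(M)\le F(M)$ and $F_2(M)\,r(t_n)\le F(M)\,\delta_n$ (the latter from $\delta_n\ge r(t_n)$ and $F_2\le F$) give exactly
$$
\bigl|P_{t_n}\varphi(x)-P_{t_n}\varphi(y)\bigr|\le d(x,y)\,F(M)\,\bigl(\norm{\varphi}_\infty+\delta_n K\bigr),
$$
which is~\eqref{eq:ASF+}. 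The proof is a short triangle-inequality argument, so I do not anticipate a genuine obstacle; the only points deserving attention are the sharp factor $2$ in the total-variation estimate, which is precisely what forces the coefficient $2F_1$ (rather than $F_1$) in $F$, and the minor bookkeeping needed to keep $(\delta_n)$ positive and monotone decreasing to $0$ even when $r$ has zeros.
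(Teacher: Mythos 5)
Your decomposition through the intermediate law $\Law(Z_t^{x,y})$, the factor-of-two total-variation estimate, the Lipschitz estimate via $\E d(Z_t^{x,y},Y_t^{x,y})$, and the choice $F:=2F_1+F_2$ with $\delta_n$ built from $r(t_n)$ are exactly the paper's argument; your small tweak $\delta_n:=r(t_n)+2^{-n}$ to guarantee strict positivity is a harmless (indeed slightly more careful) variant of the paper's $\delta_n:=r(n)$.

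The one omission: the definition of ASF+ (Definition~\ref{Def:ASF+}) requires the semigroup to be \emph{Feller}, and your proposal never addresses this. The paper closes that step by observing that the very bound you derived, applied to a bounded $d$-Lipschitz $\varphi$ and a sequence $x_n\to x$, gives $|P_t\varphi(x_n)-P_t\varphi(x)|\to 0$, whence $P_t(x_n,\cdot)\to P_t(x,\cdot)$ weakly by the Portmanteau theorem. You should add this paragraph; without it the conclusion ``satisfies ASF+'' is not fully established, even though everything needed is already contained in your main display.
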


It is interesting to compare \cite[Assumption A]{BKS} and Assumption~\ref{A:1}. Both of them are similar generalized-coupling-type assumptions which yield  certain mixing properties. However, the former is \textbf{global}  in space, whilst the latter is \textbf{local} in space; this difference is crucial for studying ergodic properties of certain SPDE models, see \cite[Section 5]{BKS}. Therefore we believe that Assumption~\ref{A:1} is more suited for establishing exponential ergodicity than \cite[Assumption A]{BKS}.

As a possible application of this result let us mention that it was shown in \cite{BKS} that the fractionally dissipative Euler model admits a generalized coupling satisfying Assumption~\ref{A:1}. Thus, by Theorem~\ref{Thm:2.2}, the gradient-type bound from \cite[Proposition 4.2]{CGHV} holds.

Another property, which is important for unique ergodicity is \textit{local weak irreducibility}. The following definition is inspired by \cite[Assumptions 3 and 6]{HM08}.

\begin{Definition} We say that a semigroup $(P_t)_{t\in \R_+}$ is \textit{locally weak irreducible} if there exists $x_0 \in E$ such that for any $R>0$ and $\eps>0$ there exists $T:=T(R,\varepsilon)>0$ such that for any $t\ge T$ one has
\begin{equation}\label{LWI}
\inf_{x,y \, \in \, B_R(x_0)}  \; \; \sup_{\Gamma  \in \, \mathscr{C}(P_t\delta_x, P_t\delta_y)} \;
\Gamma \left(\{(x',y') \in E \times E \; \colon \;  d(x',y')\le \eps \}\right) \; > \;  0.
\end{equation}
\end{Definition}

Our second main result provides a sufficient condition for local weak irreducibility in terms of generalized couplings. Consider the following assumption, which is the same as \cite[Assumption B2]{BKS}.

\begin{Assumption}{\textbf{A2}}\label{A:2} There exist a set $B\subseteq E$, a function $R:\R_+\to\R_+$ with \mbox{$\lim_{t\to\infty} R(t)=0$}, and $\eps>0$ such that for any $x,y \in B$ and
$t\ge0$, there exist $E$-valued random variables $Y^{x,y}_t$ and $Z^{x,y}_t$ on a common probability space with the following properties
\begin{enumerate}
   \item $\Law(Y^{x,y}_t)=P_t(y,\cdot)$ and
\begin{equation*}
d_{TV}(\Law(Z^{x,y}_t), P_t(x,\cdot)) \; \le \; 1-\eps,\quad t\ge0.
\end{equation*}
\item $\E d(Y^{x,y}_t, Z^{x,y}_t)\; \le\; R(t), \quad t\ge0$.
 \end{enumerate}
\end{Assumption}

\begin{Theorem}\label{Thm:2.3}
If there exists $x_0 \in E$ such that for all $M>0$ the semigroup $(P_t)_{t\in \R_+}$ satisfies Assumption~\ref{A:2} for the set $B:=B_M(x_0)$ (with some $\eps=\eps(M)>0$), then $(P_t)_{t\in \R_+}$ is locally weak irreducible.
\end{Theorem}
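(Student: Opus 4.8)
The plan is to fix the radius $R>0$ and the separation $\eps>0$ appearing in the definition of local weak irreducibility, apply Assumption~\ref{A:2} with $M=R$ (so that $B=B_R(x_0)$), and for each pair $x,y\in B_R(x_0)$ exhibit a single coupling $\Gamma\in\mathscr{C}(P_t\delta_x,P_t\delta_y)$ whose mass on $\{(x',y')\colon d(x',y')\le\eps\}$ is bounded below by a positive constant that is \emph{uniform} in $x,y$. Denote by $\alpha:=\eps(R)>0$ the constant and by $r(\cdot)$ the decay function supplied by Assumption~\ref{A:2} (the function called $R$ there), so that $r(t)\to0$ as $t\to\infty$. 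Writing $\mu:=P_t(x,\cdot)$, $\nu:=P_t(y,\cdot)$ and $\wt\mu:=\Law(Z^{x,y}_t)$, the pair $(Z^{x,y}_t,Y^{x,y}_t)$ is itself a coupling of $\wt\mu$ and $\nu$ with transport cost $\E\,d(Z^{x,y}_t,Y^{x,y}_t)\le r(t)$; its only defect is that its first marginal is $\wt\mu$ rather than the required $\mu$, and by part~(1) of Assumption~\ref{A:2} the two differ by at most $1-\alpha$ in total variation.

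To repair this defect I would glue on a maximal coupling. By the coupling characterization of the total variation distance, from $d_{TV}(\wt\mu,\mu)\le1-\alpha$ one obtains a coupling $Q\in\mathscr{C}(\mu,\wt\mu)$ charging the diagonal with mass $Q(\{u=v\})=1-d_{TV}(\wt\mu,\mu)\ge\alpha$. Since $E$ is Polish, the gluing lemma applies: disintegrating $Q$ and the law of $(Z^{x,y}_t,Y^{x,y}_t)$ along their common marginal $\wt\mu$ yields a measure $\Pi$ on $E\times E\times E$, with coordinates denoted $(X,Z,Y)$, whose $(X,Z)$-marginal equals $Q$ and whose $(Z,Y)$-marginal equals the law of $(Z^{x,y}_t,Y^{x,y}_t)$. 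The candidate coupling is then $\Gamma$, the law of $(X,Y)$ under $\Pi$, which lies in $\mathscr{C}(\mu,\nu)$.

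The estimate is now short. Under $\Pi$ we have $\Pi(X=Z)\ge\alpha$ and $\E_\Pi\,d(Z,Y)\le r(t)$, and on the event $\{X=Z\}$ one has $d(X,Y)=d(Z,Y)$; hence, by Markov's inequality,
$$
\Gamma\big(\{(x',y')\colon d(x',y')\le\eps\}\big)\;\ge\;\Pi\big(\{X=Z\}\cap\{d(Z,Y)\le\eps\}\big)\;\ge\;\alpha-\frac{r(t)}{\eps}.
$$
Because $\alpha$ and $r$ depend only on $R$ and not on the individual points $x,y$, this lower bound is uniform over $x,y\in B_R(x_0)$. Choosing $T=T(R,\eps)$ so large that $r(t)<\alpha\eps/2$ for all $t\ge T$ then gives $\inf_{x,y\in B_R(x_0)}\sup_{\Gamma}\Gamma(\{d(x',y')\le\eps\})\ge\alpha/2>0$, which is precisely \eqref{LWI}.

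I expect the only genuinely technical point to be the gluing construction together with the correct bookkeeping of the three marginals (in particular making sure the maximal coupling is taken between $\mu$ and $\wt\mu$ and glued along $\wt\mu$); the rest—Markov's inequality, the selection of $T$, and the uniformity in $x,y$—is routine, the uniformity being automatic since Assumption~\ref{A:2} furnishes a single $\alpha=\eps(R)$ and a single decay function for the whole ball $B_R(x_0)$.
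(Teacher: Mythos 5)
Your proposal is correct and follows essentially the same route as the paper's own proof: realize the total variation bound by a maximal coupling of $P_t(x,\cdot)$ and $\Law(Z^{x,y}_t)$, glue it to the pair $(Z^{x,y}_t,Y^{x,y}_t)$ along their common marginal via the gluing lemma, and combine the diagonal mass with a Markov-inequality bound on $d(Z^{x,y}_t,Y^{x,y}_t)$ (your inequality $\Pi(A\cap B)\ge \Pi(A)-\Pi(B^c)$ is the same elementary step the paper writes as $\P(A\cap B)\ge\P(A)+\P(B)-1$). The only differences are notational, plus your slightly more careful choice of $T$ using only $r(t)\to 0$ rather than monotonicity of the decay function.
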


Finally, the following theorem illustrates the use of these notions.

\begin{Theorem}\label{Thm:2.4}
If $(P_t)_{t\in \R_+}$ is locally weak irreducible and satisfies ASF+ with a non-decreasing function $F:\R_+ \to [0,\infty)$ such that $\|F\|_\infty <\infty$, then $(P_t)_{t\in \R_+}$ possesses at most one invariant probability measure.
\end{Theorem}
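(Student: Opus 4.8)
\emph{Plan.} I argue by contradiction. If $(P_t)$ had two distinct invariant measures, then the ergodic decomposition would supply two \emph{distinct ergodic} invariant measures $\mu_1\neq\mu_2$, and, being ergodic and distinct, they are mutually singular, so that $d_{TV}(\mu_1,\mu_2)=1$. I will show that ASF+ with bounded $F$ together with local weak irreducibility forces $d_{TV}(\mu_1,\mu_2)<1$, which is the contradiction sought. Write $C_F:=\|F\|_\infty<\infty$ (if $C_F=0$ then $P_{t_n}\delta_x=P_{t_n}\delta_y$ for all $x,y$ and uniqueness is immediate, so assume $C_F>0$). \emph{Step 1 (uniform Wasserstein bound).} For the sequences $(t_n),(\delta_n)$ from ASF+ set $d_n(u,v):=1\wedge\big(d(u,v)/\delta_n\big)$; since $\delta_n\searrow0$, the $(d_n)$ form a totally separating sequence of bounded continuous pseudo-metrics in the sense of Definition~2.1. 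A function $\psi$ that is $1$-Lipschitz with respect to $d_n$ is $(1/\delta_n)$-Lipschitz with respect to $d$ and has oscillation $\le1$, so after subtracting a constant $\|\psi\|_\infty\le\tfrac12$. Feeding such $\psi$ into \eqref{eq:ASF+} with $K=1/\delta_n$ and using $F\le C_F$ gives $|P_{t_n}\psi(x)-P_{t_n}\psi(y)|\le\tfrac32 C_F\,d(x,y)$, and Kantorovich--Rubinstein duality then yields the key estimate
\[
W_{d_n}\big(P_{t_n}\delta_x,P_{t_n}\delta_y\big)\ \le\ \min\!\big(1,\ c\,d(x,y)\big),\qquad c:=\tfrac32 C_F,
\]
\emph{uniformly} in $x,y\in E$ and $n$; boundedness of $F$ is exactly what removes the dependence on $x,y$.

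\emph{Step 2 ($\lim_n W_{d_n}$ dominates total variation).} For fixed $\nu_1,\nu_2\in\cP(E)$ I claim $\lim_n W_{d_n}(\nu_1,\nu_2)\ge d_{TV}(\nu_1,\nu_2)$. Pick optimal couplings $\gamma_n$ for $W_{d_n}$ (they exist and form a tight family, the marginals being fixed) and pass to a weak limit $\gamma_n\to\gamma\in\mathscr{C}(\nu_1,\nu_2)$ along a subsequence. For each fixed $m$ one has $\int d_m\,d\gamma=\lim_n\int d_m\,d\gamma_n\le\lim_n W_{d_n}(\nu_1,\nu_2)$; letting $m\to\infty$ and using $d_m\nearrow\I(x\neq y)$ together with the coupling representation $d_{TV}(\nu_1,\nu_2)=\inf_{\gamma'}\gamma'(\{x\neq y\})$ proves the claim. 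In particular, for the singular pair $\mu_1,\mu_2$ we get $\lim_n W_{d_n}(\mu_1,\mu_2)\ge1$.

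\emph{Step 3 (inserting the irreducibility time).} Choose $R$ so large that $\mu_1(B_R(x_0))\,\mu_2(B_R(x_0))>0$, where $x_0$ is the point from local weak irreducibility, fix $\eps:=1/(2c)$, and take $T\ge T(R,\eps)$ as in \eqref{LWI}, so that $p:=\inf_{x,y\in B_R(x_0)}\sup_{\Gamma}\Gamma(\{d\le\eps\})>0$. By invariance $\mu_i=P_{t_n}(P_T\mu_i)$ and convexity of $W_{d_n}$ under mixtures, for any coupling $\tilde\Pi$ of $P_T\mu_1$ and $P_T\mu_2$,
\[
W_{d_n}(\mu_1,\mu_2)\ \le\ \int W_{d_n}\big(P_{t_n}\delta_{x'},P_{t_n}\delta_{y'}\big)\,\tilde\Pi(\diff x',\diff y')\ \le\ \int\min\!\big(1,\ c\,d(x',y')\big)\,\tilde\Pi(\diff x',\diff y').
\]
I construct $\tilde\Pi$ from the product coupling $\mu_1\otimes\mu_2$ of the starting points by gluing in, for each pair $(x,y)\in B_R(x_0)^2$, a near-optimal local-weak-irreducibility coupling $\Gamma^{x,y}$ of $(P_T\delta_x,P_T\delta_y)$ with $\Gamma^{x,y}(\{d\le\eps\})\ge p/2$, and an arbitrary coupling off $B_R(x_0)^2$. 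On the good set the inner integrand is at most $c\eps+\big(1-\tfrac p2\big)\cdot(1-c\eps)=1-\tfrac p4$, and elsewhere at most $1$, whence $W_{d_n}(\mu_1,\mu_2)\le 1-\kappa$ with $\kappa:=\tfrac p4\,\mu_1(B_R(x_0))\,\mu_2(B_R(x_0))>0$, uniformly in $n$. Combined with Step~2 this gives $1\le\lim_n W_{d_n}(\mu_1,\mu_2)\le1-\kappa$, a contradiction; hence $\mu_1=\mu_2$.

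\emph{Main obstacle.} The delicate point is Step~3. Local weak irreducibility only produces a coupling that succeeds with some \emph{positive} probability $p$, not probability one, so one must not try to couple all of the mass; the right observation is that it suffices to beat total variation by a \emph{fixed} amount $\kappa>0$ that is uniform in $n$. Crucially, the mass starting outside $B_R(x_0)^2$ is harmless since $\min(1,\cdot)\le1$ there, so no tail or Lyapunov control is required and the entire gain comes from the positive-mass good set. The supporting technicalities---existence and tightness of the optimal couplings $\gamma_n$ in Step~2, and a measurable selection $(x,y)\mapsto\Gamma^{x,y}$ in Step~3---are routine.
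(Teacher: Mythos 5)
Your proof is correct, but it takes a genuinely different route from the paper. The paper argues by contradiction using an imported result (Proposition~3.5, i.e.\ \cite[Theorem 2.1]{HM11}): under ASF+ the supports of two distinct ergodic invariant measures must be at distance at least $1/\|F\|_\infty$ from each other; it then combines local weak irreducibility with two topological lemmas (a Feller lower bound on $P_T(\cdot,A)$ near a point, and a lemma locating support points in sets of positive measure) to exhibit support points $w_1\in\operatorname{supp}(\mu_1)$, $w_2\in\operatorname{supp}(\mu_2)$ with $d(w_1,w_2)\le 6\eps<1/\|F\|_\infty$ --- a contradiction. You instead run the quantitative contraction mechanism of \cite[Theorem 2.5]{HM08} restricted to invariant measures: boundedness of $F$ upgrades \eqref{eq:ASF+} to a bound on $W_{d_n}(P_{t_n}\delta_x,P_{t_n}\delta_y)$ that is uniform in $x,y$, the totally separating metrics $d_n=1\wedge(d/\delta_n)$ make $\lim_n W_{d_n}$ dominate $d_{TV}$, and local weak irreducibility plus invariance (which guarantees each $\mu_i$ charges some ball $B_R(x_0)$, so no Lyapunov or tail control is needed) pushes $W_{d_n}(\mu_1,\mu_2)$ uniformly below $1$, contradicting mutual singularity of distinct ergodic measures. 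What your approach buys is self-containedness (no appeal to the HM11 support-separation theorem or to the paper's Lemmas~3.6--3.7) and a quantitative output ($\lim_n W_{d_n}(\mu_1,\mu_2)\le 1-\kappa$); what it costs is the measurable-selection machinery needed to assemble the mixture coupling $\tilde\Pi$ from the pointwise couplings $\Gamma^{x,y}$ and the near-optimal couplings of $P_{t_n}\delta_{x'},P_{t_n}\delta_{y'}$ --- the paper's route only ever needs a single coupling for a single pair of points, so it avoids this entirely. These selection arguments are indeed standard (Kuratowski--Ryll-Nardzewski, or a duality reformulation of the convexity step that removes one of the two selections), but they deserve more than the word ``routine'' if this were to be written out in full.
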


Recall that it was shown in \cite[Theorem 2.5]{HM08} that \textbf{global} weak irreducibility and ASF+ with $\norm{F}_\infty <\infty$ additionally imply existence of an invariant measure and exponential ergodicity. Theorem~\ref{Thm:2.4} shows that if the semigroup satisfies \textbf{local} rather than \textbf{global} weak irreducibility, then uniqueness of invariant measure is guaranteed. This result is optimal in the following sense: the given assumptions do not guarantee existence of an invariant probability measure (see Example \ref{Ex:2}); furthermore the requirement  $\norm{F}_\infty <\infty$ cannot be dropped (see Example \ref{Ex:1}).

In addition, we note that Theorem~\ref{Thm:2.4} complements  \cite[Corollary 3.17]{HM}. The latter shows unique ergodicity provided that the semigroup satisfies
a stronger condition than local weak irreducibility and a weaker condition than ASF+ with finite \,$\norm{F}_\infty$.

\begin{Example} \label{Ex:1} This example shows that local weak irreducibility together with ASF+ lacking the requirement  $\norm{F}_\infty <\infty$  does not guarantee unique ergodicity (therefore it implies that ASF together with local weak irreducibility are insufficient for unique ergodicity).

Fix $\xi \in (0,2^{-1})$ and consider the state space $E:=\N \cup \{n+ \frac{\xi}{n}\mid n \in \N\} $ equipped with the standard Euclidean distance. Consider the Markov transition function $(P_t)_{t \in \Z_+}$ induced by
\begin{align*}
&P_1(x,A)=
  \begin{cases} \frac{1}{2} \delta_{1}(A) +  \frac{1}{2} \delta_{x+1}(A),& \quad \text{ if } 								x \in \N \\[1ex]
								\frac{1}{2} \delta_{1+ \xi }(A)  +  \frac{1}{2} \delta_{\lfloor x \rfloor+1+								\frac{\xi}{\lfloor x \rfloor+1}}(A),& \quad \text{ if } x \in \{n+
								\frac{\xi}{n} \mid n \in \N\},
  \end{cases}
\end{align*}
where $A \in 2^E= \B(E)$. In other words, from any positive integer the process goes to the next integer with probability $\frac{1}{2}$ and to $1$ with probability $\frac{1}{2}$. Similarly, from any shifted positive integer, it moves to the next  shifted integer with probability $\frac{1}{2}$ and to $1+ \xi$ with probability $\frac{1}{2}$.

Clearly, this dynamic has two invariant probability measures: one sits on the integers and the other sits on the shifted integers:
$$
\mu_1 :=  \sum_{i=1}^\infty \frac{1}{2^i} \delta_i \qquad \text{ and } \qquad \mu_2 :=  \sum_{i=1}^\infty \frac{1}{2^i}  \delta_{i+ \frac{\xi}{i}}
$$

To see that the semigroup has the local weak irreducibility property \eqref{LWI}, fix arbitrary $\eps>0$, $x,y \in E$. Choose $K \in \N$ such that $\xi/K< \varepsilon$ and define $A_K:= \{K, K+\xi/K\}$. Then, obviously $A_K\times A_K  \subseteq \{(x',y') \in E \times E\colon |x'-y'| \le \varepsilon\}$ and we have
\begin{align*}
\sup_{\Gamma \, \in \, \mathscr{C}(P_{K+1}\delta_x, P_{K+1}\delta_y)} \Gamma \left(\{(x',y') \in E \times E\, \colon \,d(x',y')\le \eps \}\right)  \; &\ge \; \sup_{\Gamma \, \in \, \mathscr{C}(P_{K+1}\delta_x, P_{K+1}\delta_y)} \;  \Gamma \left(A_K \times A_K \right) \\
&\ge  \; P_{K+1}(x, A_K) \; P_{K+1}(y, A_K) \\
&\ge \; \frac{1}{2^{2K+2}}  \; > \;  0.
\end{align*}
Hence, bound \eqref{LWI} holds for arbitrary $x,y \in E$, and thus this semigroup is locally weak irreducible.

Finally, let us show now that $(P_t)_{t\in \Z_+}$ satisfies ASF+. Obviously the semigroup is strong Feller, however by Remark \ref{rem:2.4} this only implies that $(P_t)_{t\in \Z_+}$ is ASF rather than ASF+; therefore we have to check the ASF+ property directly. Choose $t_n:=1$ and $\delta_n:=0$, $n\in\N$ and $x_0:=1$. Take any function $\phi: E \to \R$ which is Lipschitz with constant $K$.
Then, clearly,
\begin{equation}
\label{step1asfex}
|P_1\phi(x) - P_1\phi(y) |  \; \le \;  2\norm{\phi}_{\infty}, \quad x,y\in E
\end{equation}
It is easy to see, that if $x\neq y$, then
\begin{equation*}
|x-y|  \; \ge \; \frac{\xi}{x\vee y} \; = \;  \frac{\xi}{((x-1)\vee (y-1))+1}.
\end{equation*}
Combining this with \eqref{step1asfex}, we get
$$
| P_1\phi(x) - P_1\phi(y) |  \; \le \; 2\norm{\phi}_{\infty}  \;  \le \;  2 |x-y| \; \frac{(|x-1|\vee |y-1|)+1}{\xi} \; \norm{\phi}_{\infty}.
$$
Thus, inequality \eqref{eq:ASF+} holds with $F(u):=\frac{2u+1}{\xi}$, $u\ge0$. Therefore, this process is locally weak irreducible and satisfies ASF+ (and is even strong Feller), yet it has two invariant probability measures.
\end{Example}

\begin{Example} \label{Ex:2}
This example shows that the assumptions of Theorem~\ref{Thm:2.4} do not guarantee existence of an invariant probability measure.

Let $E=\R$ and consider a Markov semigroup corresponding to the standard Brownian motion $(W_t)_{t \in \R_+}$ on some probability space $(\Omega, \mathcal{F}, \P)$, i.e. $P_t(x,\cdot)=\operatorname{Law}(x+W_t)$ for any $x \in \R$, $t\ge 0$. This semigroup has the local weak irreducibility property as well as ASF+ with bounded $F$.

To establish ASF+ with bounded $F$, set $t_n=1$, $\delta_n=0$, $n \in \N$ and without loss of generality let $\phi: \R \to \R$ be bounded. Let $p_t$ be the density of a Gaussian random variable with mean $0$ and variance $t>0$. Then, applying \cite[Bound (2.4e)]{Ros87}, we deduce
that there exist positive constants $M,\alpha$ such that for any $x,y\in\R$ one has
\begin{align*}
|P_1\phi(x) - P_1\phi(y) |& \;  \le  \;  \norm{\phi}_{\infty} \; \int_\R\, \left| p_1(z-x) -p_1(z-y) \right|\; \diff z\\
& \; \le \; M \; |x-y|\; \norm{\phi}_{\infty}  \; \int_\R  \, (e^{- \alpha(z-x)^2} + e^{- \alpha (z-y)^2})\;\diff z\\
& \; = \; 2 M \;  \sqrt{\frac{\alpha}{\pi}}\;  |x-y| \; \norm{\phi}_{\infty}.
\end{align*}
Hence, the semigroup satisfies ASF+ with the constant $F:= 2M \sqrt{\frac{\alpha}{\pi}}$.

Now let us show local weak irreducibility. Let $x_0=0$, $T=1$. Choose any $R>0$, $\eps>0$, $t\ge1$. Let $\Gamma^{x,y}:= P_t\delta_x \otimes P_t\delta_y$ be the independent coupling of $P_t\delta_x$ and $P_t\delta_y$ for any $x,y \in B_R(0)$. Then,
\begin{align*}
\Gamma^{x,y}\left( \left\{ x',y' \in \R\colon |x'-y'| \; \le \; \varepsilon \right\}\right) \; &\ge \;
 P_t\left( x , \left[ -\eps/2, \eps/2 \right] \right)  P_t\left( y , \left[ -\eps/2, \eps/2 \right] \right) \\
& = \;  \int_{-\eps/2}^{\eps/2}  p_t(x-z)\, d z \;  \int_{-\eps/2}^{\eps/2}  p_t(y-z)\, \diff z
\end{align*}
for every $x,y \in B_{R}(0)$. However, for every $x \in B_{R}(0)$,  $z \in [-\eps/2, \eps/2]$, we obviously have
$$
p_t(x-z) \;  \ge  \; \frac{1}{\sqrt{2\pi t}} \, \exp \Bigl\{ - \frac{( R+\eps/2)^2}{2t} \Bigr\}  =: \lambda \;   > \; 0
$$
and thus,
\begin{equation*}
\Gamma^{x,y}\left( \left\{ x',y' \in \R\colon \; |x'-y'|\le \eps \right\}\right) \; \ge \;
\lambda^2 \eps^2  \; > \;  0
\end{equation*}
for every $x,y \in B_{R}(0)$, which implies local weak irreducibility. Hence, the assumptions of Theorem \ref{Thm:2.4} are satisfied. On the other hand, as it is well-known, the semigroup $(P_t)_{t \in \R_+}$ has no invariant probability measure.
\end{Example}

Finally, let us present an example showing that the strong Feller property does not imply ASF+.

\begin{Example}\label{ex:2}
Let $E:=[0,3]$ be endowed with the Euclidean distance and let $\zeta$ be a random variable uniformly distributed on $[0,1/3]$. Define a Markov transition function $(P_n)_{n\in\Z_+}$ on $([0,3], \B([0,3]))$ by
$$
P_1(x, \cdot) := \begin{cases} \Law\left( 2- \sqrt{x} + \zeta \right), \qquad \quad x \in [0,1]; \\ \Law\left( \frac{2}{3} + \frac{x}{3} + \zeta \right),  \qquad \quad \; \;  x \in [1,3]. \end{cases}
$$

We claim that this semigroup is stong Feller but ASF+ property does not hold for this semigroup.

We start by showing the strong Feller property. Let $\phi: [0,3] \to \R$ be a bounded measurable function. Our goal it to show that the function $P_1\phi$ is continuous. Therefore, let $x \in [0,3]$ and $(x_n)_{n\in\Z_+} \subseteq [0,3]$ be a sequence converging to $x$. We need to show
\begin{equation}\label{contbound}
| P_1\phi(x) - P_1\phi(x_n) | \; \to \; 0, \quad\text{as $n\to\infty$}.
\end{equation}

Note that for any $y_1, y_2 \in [0,1]$ with $y_1 \le y_2$, we have
\begin{align*}
d_{TV}( P_1 \delta_{y_1} , P_1 \delta_{y_2} ) \; &= \; \frac32 \int_{\R} \, \bigl| \I_{[2-\sqrt{y_1},  2-\sqrt{y_1}+\frac{1}{3}]}(z)   -  \I_{[2-\sqrt{y_2} ,  2-\sqrt{y_2}+\frac{1}{3}]}(z)  \bigr|\;\diff z\\[1ex]
&\le \; 3(\sqrt{y_2}  - \sqrt{y_1}).
\end{align*}
Similarly, in the case where $y_1, y_2 \in [1,3]$, one gets $d_{TV}( P_1 \delta_{y_1} , P_1 \delta_{y_2}) \le y_2-y_1$. This implies, that
\begin{equation*}
\lim_{n\to\infty} \; d_{TV}(P_1(x,\cdot),P_1(x_n,\cdot))\;=\;0,
\end{equation*}
which, in turn, yields \eqref{contbound} and establishes the strong Feller property for this semigroup.

Now let us show that ASF+ fails for this process. Assume for the sake of a contradiction that ASF+ holds, i.e., that there exist a non-decreasing sequence
$(t_n)_{n\in\Z_+} \subseteq \Z_+$, a sequence $(\delta_n)_{n\in \Z_+} \subseteq \R_+$ with $\delta_n \searrow 0$ as $n\to \infty$, and a non-decreasing function $F:[0, \infty) \to [0, \infty)$ such that for every Lipschitz continuous
$\phi: [0,3] \to \R$ with Lipschitz constant $K>0$ it holds
$$
\left| P_{t_n} \phi (x) - P_{t_n} \phi (y) \right|  \;  \le \;  |x-y|  \; F(|x| \vee |y|) \; (\, \norm{\varphi}_\infty + \delta_n K),\quad x,y \in [0,3],\, n\in \N.
$$
Taking in the above inequality $n=1$, $x=0$ and $\varphi(z)= z$, we obtain in particular
\begin{equation}\label{eq:11}
\left|P_{t_1}\phi(0) - P_{t_1}\phi(y)\right| \; \le \;  y\, F(1) \, (3 + \delta_1) \quad y \in [0,1].
\end{equation}
By definition of the semigroup $(P_n)_{n\in\N}$, we have for every $n\in\Z_+$
$$
P_n\varphi(u)  =  
\frac{5}{4}(1 - 3^{-n})  + 3^{-n}u,\quad u\in[1,3].
$$
Thus we derive for every $z \in [0,1]$
$$
P_{n} \varphi (z) \; =  \; P_1(P_{n-1} \varphi)(z)  \; = \;
\E [P_{n-1}\varphi(2- \sqrt{z}+ \zeta) ]  \; = \; \frac54(1- 3^{-n+1})  +  \frac{2- \sqrt{z} + 1/6}{3^{n-1}},
$$
where we took into account that for $z \in [0,1]$ we have $2-\sqrt{z} + \zeta \in (1,3]$ almost surely.

Therefore, we finally get
$$
\left| P_{n} \varphi (0)  -  P_{n} \varphi (y) \right|  \; = \;  3^{-n+1}\sqrt{y}.
$$
for every $y\in [0,1]$ and $n\in \N$. Combining this with \eqref{eq:11}, we derive that the following should hold for every $y \in [0,1]$:
$$
3^{-t_1+1}\sqrt{y} \; \le \; y \, F(1) \, (3+ \delta_1).
$$
However this is impossible. Therefore this process does not satisfy the ASF+ condition.
\end{Example}

\newpage
\section{Proofs}

\begin{proof}[Proof of Theorem \ref{Thm:2.2}]
We begin with the following calculation. Let $t\ge0$, $x,y\in E$. Let $\phi\colon E \to \R$ be a
$d$-Lipschitz continuous function with constant $K>0$. Denote $P_t^Z:= \Law(Z^{x,y}_t)$. Then
%
%
%
\begin{align}\label{boundth25}
&|P_{t}\phi(x)  -  P_{t}\phi(y)| \nonumber\\
 &\qquad \le \; \left| \int_E\, \phi(z) \; P_{t}(x,\diff z)  -\int_E\,  \phi(z) \; P_t^Z(\diff z)\right|  \; + \;  \left| \int_E \, \phi(z)\; P_{t}^Z(\diff z)  - \int_E \, \phi(z)  \; P_{t}(y,\diff z)\right| \nonumber\\
&\qquad \le \; 2\norm{\phi}_{\infty} \; d_{TV} \left(\Law(Z^{x,y}_{t}) ,  \Law(X^{x,y}_{t})\right)\; +\;
\int_E  \, |\varphi(Z^{x,y}_{t}) - \varphi(Y^{x,y}_{t})|  \; \diff \P.
\end{align}
Using part 1 of Assumption \ref{A:1}, we bound the first term in the right--hand side
of~\eqref{boundth25}:
\begin{equation}\label{step1bound}
2\norm{\varphi}_\infty \; d_{TV} \left(\Law(Z^{x,y}_{t}) ,  \Law(X^{x,y}_{t})\right) \; \le \;
2F_1(d(x,x_0) \vee  d(y,x_0)) \; \norm{\phi}_\infty \; d(x,y).
\end{equation}
Applying part 2 of Assumption \ref{A:1}, we obtain the following bound on the second term in the right--hand side of \eqref{boundth25}.
\begin{equation*}
\int_E \, |\phi(Z^{x,y}_{t}) - \varphi(Y^{x,y}_{t})|  \; \diff \P \;  \le \;  K \, \E d(Z^{x,y}_{t},Y^{x,y}_{t} ) \; \le \; F_2( d(x,x_0) \vee d(y,x_0))\; r(t) \, K \, d(x,y).
\end{equation*}
Hence, combining this with \eqref{step1bound} and \eqref{boundth25}, we get
\begin{equation}\label{eq:3.1}
| P_{t}\phi(x)  -  P_{t}\phi(y)| \;  \le  \; F_3( d(x,x_0) \vee d(y,x_0)) \;  d(x,y) \;
(  \, \norm{\varphi}_\infty + r(t)K),
\end{equation}
where we denoted $F_3(z):=2F_1(z)+F_2(z)$, $z\ge0$.

Now let us show that $(P_t)_{t\in \R_+}$ satisfies ASF+ property. First, let us show that this semigroup is Feller. Fix $t>0$, $x\in E$. Let $(x_n)_{n\in\Z_+}$ be  a sequence of elements in $E$ converging to $x$. Then, using \eqref{eq:3.1}, we obtain for any bounded $d$-Lipschitz function $\phi\colon E \to \R$ with Lipschitz constant $K>0$
\begin{equation*}
| P_t \phi(x)  -  P_t\phi(x_n) | \; \le \;  F_3( d(x,x_0) \vee d(x_n,x_0)) \; d(x,x_n) \;
(  \, \norm{\varphi}_\infty + r(t)K).
\end{equation*}
Therefore $| P_t \phi(x)  -  P_t\phi(x_n) |\to 0$ as $n\to\infty$. This, together with the
Portmanteau theorem (see, e.g., \cite[Lemma~3.7.2]{Shi16}), implies that $P_t(x_n,\cdot)$ converges weakly to $P_t(x,\cdot)$ as $n\to\infty$. Thus, the semigroup $(P_t)_{t\in\R_+}$ is Feller.

Second, let us verify bound \eqref{eq:ASF+}. For $n\in\N$ put $t_n:=n$, $\delta_n:=r(n)$. We claim know that \eqref{eq:ASF+} holds for the sequences $(t_n)$, $(\delta_n)$, and the function
$F:=F_3$ defined above. Indeed, let $\phi: E \to \R$ be a  $d$-Lipschitz continuous function with constant $K>0$ and $x,y\in E$. Then, applying \eqref{eq:3.1}, we derive for any $n\in\N$
\begin{align*}
| P_{t_n}\phi(x)  -  P_{t_n}\phi(y)|\; &= \; | P_{n}\phi(x)  -  P_{n}\phi(y)|\\[1ex]
&\le \;  F_3( d(x,x_0) \vee d(y,x_0)) \; d(x,y)\; ( \, \norm{\varphi}_\infty + \delta_n K),
\end{align*}
which is \eqref{eq:ASF+}. Therefore all the conditions of Definition~\ref{Def:ASF+} are met and thus the semigroup $(P_t)$ satisfies ASF+ property.
\end{proof}

As a helpful tool in the sequel, we would like to recall the following \textit{Gluing Lemma}.
\begin{Proposition}[{\cite[p. 23-24]{V08}}] \ \label{Gluing Lemma}
Let $\mu_i$, $i=1,2,3$, be probability measures on a Polish space $E$.
If $(X_1, X_2)$ is a coupling of $\mu_1, \mu_2$ and $(Y_2, Y_3)$ is a coupling of $\mu_2, \mu_3$, then there exists a triple of random variables $(V_1, V_2, V_3)$ such that $(V_1,V_2)$ has the same law as $(X_1, X_2)$ and $(V_2, V_3)$ has the same law as $(Y_2, Y_3)$.
\end{Proposition}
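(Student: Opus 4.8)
The plan is to realize the desired triple as the \emph{conditionally independent gluing} of the two given couplings over their common marginal $\mu_2$, using regular conditional probabilities. Write $\pi := \Law(X_1, X_2)$ and $\sigma := \Law(Y_2, Y_3)$ for the two couplings, regarded as probability measures on $(E\times E, \mathcal{E}\otimes\mathcal{E})$. By assumption the second marginal of $\pi$ and the first marginal of $\sigma$ both equal $\mu_2$, so the two couplings "share" the middle coordinate, and it is this shared marginal that we glue along.

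First I would disintegrate each coupling with respect to $\mu_2$. Since $E$ is Polish, the product $E\times E$ is a standard Borel space and regular conditional probabilities exist; hence there are probability kernels $K, L\colon E \times \mathcal{E} \to [0,1]$ such that
\[
\pi(\diff x_1, \diff x_2) = K(x_2, \diff x_1)\, \mu_2(\diff x_2), \qquad \sigma(\diff x_2, \diff x_3) = L(x_2, \diff x_3)\, \mu_2(\diff x_2).
\]
Informally, $K(x_2,\cdot)$ is the conditional law of $X_1$ given $X_2=x_2$, and $L(x_2,\cdot)$ is the conditional law of $Y_3$ given $Y_2=x_2$.

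Next I would define a probability measure $\gamma$ on $(E^3, \mathcal{E}^{\otimes 3})$ by
\[
\gamma(\diff x_1, \diff x_2, \diff x_3) := K(x_2, \diff x_1)\, L(x_2, \diff x_3)\, \mu_2(\diff x_2),
\]
that is, the law under which, conditionally on the middle coordinate $x_2$, the outer coordinates are independent with laws $K(x_2,\cdot)$ and $L(x_2,\cdot)$. The joint measurability of $x_2 \mapsto K(x_2, A)\, L(x_2, B)$ guarantees that $\gamma$ is a well-defined probability measure. Taking $(V_1, V_2, V_3)$ to be the coordinate projections on $(E^3, \gamma)$, it remains only to compute the two bivariate marginals: integrating out $x_3$ and using $L(x_2, E)=1$ recovers $\pi$ on the coordinates $(1,2)$, while integrating out $x_1$ and using $K(x_2, E)=1$ recovers $\sigma$ on the coordinates $(2,3)$. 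Thus $\Law(V_1,V_2)=\pi=\Law(X_1,X_2)$ and $\Law(V_2,V_3)=\sigma=\Law(Y_2,Y_3)$, as desired.

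The only genuinely nontrivial ingredient is the existence of the disintegrations $K, L$, and this is precisely where the Polish (equivalently, standard Borel) assumption on $E$ is essential; without it regular conditional distributions need not exist and the construction breaks down. Everything past that point is a routine Tonelli-type verification of marginals, so I expect the disintegration step to be the only place requiring care.
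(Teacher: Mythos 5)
Your proof is correct and complete: the paper itself gives no proof of this proposition, citing Villani (pp.~23--24), and your conditionally independent gluing via disintegration of the two couplings over the common marginal $\mu_2$ is precisely the standard argument given in that reference. The one step requiring care --- existence of the regular conditional probabilities, which is where the Polish assumption enters --- is correctly identified and justified, and the marginal verification is routine as you say.
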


\begin{proof}[Proof of Theorem \ref{Thm:2.3}] Fix $M>0$, $\delta>0$ and choose $T\!:=T(\delta)\ge 0$ such that \mbox{$\delta^{-1} R(T) < \frac{\varepsilon}{2}$.} Let $t\ge T$.  Then, using the Markov inequality and part 2 of assumption \ref{A:2}, we get  for any $x,y\in B_M(x_0)$
$$
\P\left( d(Z_t^{x,y} ,Y_t^{x,y}) > \delta  \right)  \; \le \; \frac{\E d(Z_t^{x,y} ,Y_t^{x,y})}{\delta} \; \le \; \frac{R(t)}{\delta} \; \le \; \frac{R(T)}{\delta} \; < \; \frac{\varepsilon}{2},
$$
which is equivalent to
\begin{equation}\label{eq:14}
\P\left(  d(Z_t^{x,y} ,Y_t^{x,y})  \le \delta \right) \;  \ge \; 1- \frac{\varepsilon}{2}.
\end{equation}
By part 1 of Assumption \ref{A:2} and the definition of the total variation distance, there exist random variables $\wt{X}^{x,y}_t, \wt{Z}_t^{x,y}$ such that
$$
d_{TV} \left( P_t(x,\cdot) ,  \Law(Z^{x,y}_t)\right)  \; = \;  \P\left(\wt{X}^{x,y}_t \neq \wt{Z}_t^{x,y} \right) \; \le \; 1- \varepsilon
$$
and therefore
\begin{equation}\label{eq:15}
 \P\left(\wt{X}^{x,y}_t = \wt{Z}_t^{x,y} \right) \; > \;  \varepsilon.
\end{equation}
Now, according to Proposition~\ref{Gluing Lemma}, there exist random variables $V^{X}_t,V^{Z}_t, V^{Y}_t$ on a probability space $(\Omega, \mathcal{F}, \P)$ such that $(V^{X}_t, V^{Z}_t)$ has the same law as $(\wt{X}_t^{x,y}, \wt{Z}_t^{x,y})$ and $(V^{Z}_t, V^Y_t)$ has the same law as $(Z^{x,y}_t, Y_t^{x,y})$. Therefore, using also the fact that for any measurable sets $A, B \in \mathcal{F}$ one has $\P(A\cap B) \ge \P(A)+\P(B)-1$, we deduce
\begin{align*}
\P\left(d(V^X_t ,V^Y_t)  \le   \delta  \right) \; &\ge \;
\P\left( \{V_t^X =V_t^Z\} \cap \{(d(V^Z_t ,V^Y_t) \le  \delta\}\right)\\
&\ge\;  \P(V_t^X =V_t^Z)\; +\; \P(d(V^Z_t ,V^Y_t) \le  \delta)-1\\
&= \; \P(\wt{X}_t^{x,y} =\wt{Z}_t^{x,y})\; + \P(d(Z_t^{x,y} ,Y_t^{x,y}) \le  \delta)-1\\
&\ge\;  \eps/2,
\end{align*}
where the last inequality follows from \eqref{eq:14} and \eqref{eq:15}.

Since $(V_t^X, V_t^Y)$ is a coupling of $P_t\delta_x$ and $P_t\delta_y$, we finally obtain
$$
\frac{\varepsilon}{2} \;  \le \; \P\left(  d(V^X_t ,V^Y_t) \le \delta  \right) \;  \le \; \sup_{\Gamma \, \in \, \mathscr{C}(P_t\delta_x, P_t\delta_y)} \Gamma \left( \left\{ (x', y') \in E \times E \colon  d(x',y') \le \delta \right\} \right).
$$
Therefore bound \eqref{LWI} holds and the semigroup $(P_t)$ is locally weak irreducible.
\end{proof}

The next proposition, established in \cite[Theorem 2.1]{HM11}, will help us to develop the uniqueness result based on local weak irreducibility and ASF+ with bounded $F$. \cite{HM11} stated the result in a Hilbert space structure, however, the proof can be conducted similarly by just replacing norms by the metric distance to our reference point $x_0$.

\begin{Proposition}[{\cite[Theorem 2.1]{HM11}}] \label{Lem} Let $(P_t)_{t \in \R_+}$ be a Markov semigroup satisfying ASF+. Let $\mu_1, \mu_2$ be two distinct ergodic invariant probability measures for $(P_t)_{t \in \R_+}$. Then, for any pair of points $(w_1, w_2) \in \operatorname{supp}(\mu_1) \times  \operatorname{supp}(\mu_2)$ one has
$$
d(w_1,w_2)\; \ge \; \frac{1}{F(d(w_1,x_0) \vee d(w_2,x_0))}.
$$
\end{Proposition}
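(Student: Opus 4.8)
The plan is to transport the two invariant measures through the semigroup and to play their mutual singularity against the quantitative gradient estimate \eqref{eq:ASF+}. Throughout fix $w_1\in\operatorname{supp}\mu_1$, $w_2\in\operatorname{supp}\mu_2$ and abbreviate $D:=d(w_1,w_2)$ and $D_0:=d(w_1,x_0)\vee d(w_2,x_0)$.

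\emph{Step 1: from ASF+ to a one-step Wasserstein bound.} I would first select a convenient totally separating sequence. Since $\delta_n\searrow0$, pick an increasing sequence $\gamma_n\to\infty$ with $\delta_n\gamma_n\to0$ (e.g. $\gamma_n:=\delta_n^{-1/2}$), and set $d_n:=1\wedge(\gamma_n d)$; these are bounded continuous metrics, increasing in $n$, with $d_n(x,y)\to1$ for $x\neq y$. By Kantorovich--Rubinstein duality, $W_{d_n}(P_{t_n}\delta_x,P_{t_n}\delta_y)$ equals the supremum of $|P_{t_n}\phi(x)-P_{t_n}\phi(y)|$ over all $\phi$ with $d_n$-Lipschitz constant at most $1$. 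Any such $\phi$ has oscillation at most $1$ (so, after subtracting a constant, $\norm{\phi}_\infty\le1$) and $d$-Lipschitz constant at most $\gamma_n$; inserting it into \eqref{eq:ASF+} gives
\[
W_{d_n}\!\left(P_{t_n}\delta_x,P_{t_n}\delta_y\right)\;\le\; d(x,y)\,F\!\left(d(x,x_0)\vee d(y,x_0)\right)\,(1+\delta_n\gamma_n),\qquad x,y\in E.
\]

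\emph{Step 2: global singularity.} Since $\mu_1\neq\mu_2$ are ergodic, they are mutually singular, whence $d_{TV}(\mu_1,\mu_2)=1$. Because $(d_n)$ is totally separating, $W_{d_n}(\mu_1,\mu_2)\to d_{TV}(\mu_1,\mu_2)=1$ (this monotone limit is the content of the corresponding lemma in \cite{HM}). Invariance yields $W_{d_n}(\mu_1,\mu_2)=W_{d_n}(\mu_1 P_{t_n},\mu_2 P_{t_n})$ for every $n$.

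\emph{Step 3: localisation near $(w_1,w_2)$ and passage to the limit.} For $r>0$ put $U:=B_r(w_1)$, $V:=B_r(w_2)$; as $w_i\in\operatorname{supp}\mu_i$ we have $p_r:=\min(\mu_1(U),\mu_2(V))>0$. I would build a coupling $\pi_r\in\mathscr{C}(\mu_1,\mu_2)$ with $\pi_r(U\times V)\ge p_r$ (place a product of the normalised restrictions $\mu_1|_U,\mu_2|_V$, scaled to mass $p_r$, on $U\times V$, and couple the remaining masses arbitrarily). Mixing over $\pi_r$ the near-optimal couplings of the kernels $P_{t_n}\delta_x$ and $P_{t_n}\delta_y$ produces a coupling of $\mu_1 P_{t_n}$ and $\mu_2 P_{t_n}$, so that $W_{d_n}(\mu_1,\mu_2)\le\int W_{d_n}(P_{t_n}\delta_x,P_{t_n}\delta_y)\,\pi_r(\diff x,\diff y)$. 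Splitting the integral over $U\times V$ and its complement, bounding the former by Step 1 (using $d(x,y)\le D+2r$ and $d(x,x_0)\vee d(y,x_0)\le D_0+r$) and the latter by $W_{d_n}\le1$, and letting $n\to\infty$ (so $\delta_n\gamma_n\to0$ and the left-hand side tends to $1$), I obtain
\[
1\;\le\;p_r\,(D+2r)\,F(D_0+r)+(1-p_r).
\]
As $p_r>0$ this forces $(D+2r)F(D_0+r)\ge1$; letting $r\downarrow0$ (invoking right-continuity of $F$, which may be assumed without weakening the uses of the proposition) gives $D\,F(D_0)\ge1$, i.e. the claim.

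The main obstacle is the \emph{localisation} in Step 3. The naive idea of comparing the kernels $P_{t_n}\delta_{x_n}$, $P_{t_n}\delta_{y_n}$ for points $x_n\to w_1$, $y_n\to w_2$ lying in the full-measure sets $\{P_{t_n}\I_A=1\}$, $\{P_{t_n}\I_A=0\}$ (where $\mu_1(A)=1$, $\mu_2(A)=0$) fails: there the index of the metric $d_n$ becomes entangled with the index of the \emph{varying} measures, and two mutually singular measures can nonetheless be arbitrarily close in $W_{d_n}$. Keeping $\mu_1,\mu_2$ fixed and transporting them through $P_{t_n}$ via invariance, while charging the coupling on $U\times V$, is precisely what lets one use the clean totally-separating limit $W_{d_n}(\mu_1,\mu_2)\to d_{TV}(\mu_1,\mu_2)=1$ and the one-step estimate of Step 1 at once.
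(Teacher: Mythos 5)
Your argument is correct. Note, however, that the paper does not actually prove this proposition: it imports it verbatim from \cite[Theorem 2.1]{HM11}, remarking only that the Hilbert-space proof carries over once norms are replaced by $d(\cdot,x_0)$. Your reconstruction is a faithful, self-contained version of that argument and follows the same strategy: distinct ergodic measures are mutually singular, so $d_{TV}(\mu_1,\mu_2)=1$; the totally separating metrics $d_n=1\wedge(\gamma_n d)$ with $\delta_n\gamma_n\to 0$ turn the ASF+ bound into $W_{d_n}(P_{t_n}\delta_x,P_{t_n}\delta_y)\le d(x,y)F(d(x,x_0)\vee d(y,x_0))(1+\delta_n\gamma_n)$ via Kantorovich--Rubinstein duality (this is exactly how \cite[Proposition 3.12]{HM} deduces ASF from ASF+); invariance plus \cite[Corollary 3.5]{HM} pins $W_{d_n}(\mu_1,\mu_2)$ at $1$ from below, while the coupling charged on $B_r(w_1)\times B_r(w_2)$ bounds it from above, forcing $(D+2r)F(D_0+r)\ge 1$. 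Two harmless imprecisions: in the final display you should work with $q:=\pi_r(U\times V)\ge p_r$ rather than $p_r$ itself (the conclusion $q(c-1)\ge 0\Rightarrow c\ge 1$ is unaffected); and letting $r\downarrow 0$ only yields $D\cdot F(D_0^+)\ge 1$ with the right-hand limit of $F$ --- your fix of passing to the right-continuous regularization is legitimate since ASF+ is preserved under enlarging $F$, and in the only place the proposition is used (Theorem~\ref{Thm:2.4}) the bound $D\ge 1/\norm{F}_\infty$ is all that is needed, so even the unregularized version suffices there.
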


Note that the above lemma tells us that not only are the supports of two different ergodic invariant measures under ASF+ disjoint (which is already the case under ASF -- see \cite[Theorem 3.16]{HM}) but they are even separated by a distance depending on the function $F$.
%

To present the proof of Theorem~\ref{Thm:2.4}, we need a couple of auxiliary statements.
\begin{Lemma} \label{Lem:1}
Let $(P_t)_{t\in \R_+}$ be a Feller semigroup and $x \in E$. Then for every $t >0$ and $A\subseteq E$ open such that $P_t(x,A)>0$, there exists  an open set $B:=B(x,t,A) \subseteq E$ containing $x$ such that
\begin{equation*}
\inf_{z \in B} \; P_t(z, A) \; > \; 0.
\end{equation*}
\end{Lemma}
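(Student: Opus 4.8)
The plan is to argue by contradiction, exploiting the fact that the Feller property makes the map $z \mapsto P_t(z,A)$ lower semicontinuous at $x$ whenever $A$ is open. Recall that the Feller property means precisely that $z_n \to x$ implies $P_t(z_n,\cdot) \to P_t(x,\cdot)$ weakly; this is exactly the weak convergence that was extracted from the Portmanteau theorem in the proof of Theorem~\ref{Thm:2.2}. Since $A$ is open, the open-set half of the Portmanteau theorem (\cite[Lemma~3.7.2]{Shi16}) then gives $\liminf_{n} P_t(z_n,A) \ge P_t(x,A)$ for any such sequence. This lower semicontinuity, combined with the hypothesis $P_t(x,A) > 0$, is all that the argument needs.

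Concretely, I would suppose the conclusion fails. The negation is that for every open neighborhood of $x$ the infimum of $P_t(\cdot,A)$ over it equals $0$ (it is nonnegative, so vanishing is the only possibility). Applying this to the balls $B_{1/n}(x)$, $n\in\N$, one finds for each $n$ a point $z_n \in B_{1/n}(x)$ with $P_t(z_n,A) < 1/n$. By construction $d(z_n,x) < 1/n$, so $z_n \to x$ as $n\to\infty$.

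The contradiction then comes from two competing facts about this sequence. On one hand, $P_t(z_n,A) < 1/n$ forces $\liminf_n P_t(z_n,A) = 0$. On the other hand, the Feller property gives $P_t(z_n,\cdot)\to P_t(x,\cdot)$ weakly, so by Portmanteau $\liminf_n P_t(z_n,A)\ge P_t(x,A)>0$. These are incompatible, so the assumption was false, and some ball $B_{1/n}(x)$ must satisfy $\inf_{z\in B_{1/n}(x)} P_t(z,A)>0$; taking $B:=B_{1/n}(x)$ completes the proof.

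I do not expect a serious obstacle here: the only point requiring a little care is the reduction to sequences, which is legitimate because $(E,d)$ is metric and hence first countable. This is used twice—once to encode the failure of the infimum condition as a sequence $z_n\to x$, and once to invoke the sequential form of the Portmanteau theorem. Everything else is a direct combination of the Feller property with the open-set lower bound in Portmanteau, so the heart of the matter is simply recognizing that $z\mapsto P_t(z,A)$ is lower semicontinuous for open $A$.
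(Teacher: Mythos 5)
Your proof is correct and follows essentially the same route as the paper: negate the conclusion to produce a sequence $z_n \to x$ with $P_t(z_n,A)\to 0$, then contradict this with the Feller property and the open-set half of the Portmanteau theorem, which forces $\liminf_n P_t(z_n,A)\ge P_t(x,A)>0$. Your write-up is in fact slightly more explicit than the paper's about how the failure of the infimum condition over the balls $B_{1/n}(x)$ yields the offending sequence.
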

\begin{proof}
Assume for the sake of a contradiction, that there exists $(z_n)_{n\in\N} \subseteq E$ such that $z_n \to x$ as $n\to \infty$ and $\lim_{n\to\infty}P_t(z_n,A)=0$.
Since $(P_t)_{t\in \R_+}$ is Feller, $P_t(z_n,\cdot)$ converges weakly to $P_t(x,\cdot)$. According to Portmanteau's theorem \cite[Theorem III.1.1.III]{Shi16} this implies
$$
0  \; = \; \limes P_t(z_n, A) \; \ge \;  P_t(x,A) \;  > \;  0,
$$
which yields a contradiction.
\end{proof}

\begin{Lemma}\label{Lem:2}
Let $\mu \in \cP(E)$ and $A \in \mathcal{E}$ such that $\mu(A)>0$. Then, $\overline{A}\, \cap \, \operatorname{supp}(\mu) \neq \emptyset$.
\end{Lemma}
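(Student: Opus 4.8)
The plan is to argue by contradiction, exploiting the defining property of the support together with the separability of the Polish space $E$. Recall that on a Polish space the support $\operatorname{supp}(\mu)$ of a measure $\mu\in\cP(E)$ can be characterized as the set of all points $x\in E$ such that every open neighborhood of $x$ carries positive $\mu$-mass, and that this closed set has full measure, that is, $\mu(\operatorname{supp}(\mu))=1$, equivalently $\mu(E\setminus\operatorname{supp}(\mu))=0$.

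First I would record the full-measure property $\mu(E\setminus\operatorname{supp}(\mu))=0$, which is the only point where something beyond elementary set manipulations is used. By definition, $E\setminus\operatorname{supp}(\mu)$ is the union of all open $\mu$-null sets; here the separability (second countability, hence the Lindel\"of property) of the Polish space $E$ enters, allowing one to extract a countable subcover and thereby write $E\setminus\operatorname{supp}(\mu)$ as a countable union of $\mu$-null sets, which is therefore itself $\mu$-null.

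Next, assume for the sake of contradiction that $\overline{A}\cap\operatorname{supp}(\mu)=\emptyset$. Then $\overline{A}\subseteq E\setminus\operatorname{supp}(\mu)$, and since $A\subseteq\overline{A}$, monotonicity of $\mu$ gives $\mu(A)\le\mu(\overline{A})\le\mu(E\setminus\operatorname{supp}(\mu))=0$, contradicting the hypothesis $\mu(A)>0$. Hence $\overline{A}\cap\operatorname{supp}(\mu)\neq\emptyset$, as claimed.

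The only genuine ingredient requiring justification is the full-measure property of the support in the second paragraph; every other step is a one-line inclusion or an application of monotonicity. Since $E$ is assumed Polish and therefore separable, this property is standard, so I do not anticipate any real obstacle. (If one prefers to avoid invoking the support's full measure directly, one can run the Lindel\"of covering argument on $\overline{A}$ itself: under the contradiction hypothesis each point of $\overline{A}$ has an open null neighborhood, a countable subcover of which covers $A$ and forces $\mu(A)=0$.)
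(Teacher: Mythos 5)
Your proof is correct, but it takes a genuinely different route from the paper. The paper argues constructively: using separability it picks $z_1\in A$ with $\mu(A\cap B_{2^{-1}}(z_1))>0$, then inductively $z_{n+1}\in A\cap B_{2^{-n}}(z_n)$ with $\mu(A\cap B_{2^{-n-1}}(z_{n+1}))>0$, obtains a Cauchy sequence, and uses \emph{completeness} of $E$ to produce an explicit limit point $z\in\overline{A}$ that lies in $\operatorname{supp}(\mu)$ because every ball around $z$ contains some $B_{2^{-n}}(z_n)$ of positive measure. You instead invoke the full-measure property $\mu(\operatorname{supp}(\mu))=1$, which on a separable metric space follows from the Lindel\"of covering argument you sketch, and then conclude by a one-line contradiction. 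Your argument is shorter, uses only separability (not completeness), and in fact proves the slightly stronger statement $A\cap\operatorname{supp}(\mu)\neq\emptyset$, since already $A\subseteq E\setminus\operatorname{supp}(\mu)$ forces $\mu(A)=0$; the price is that you must import (or prove, as you correctly indicate via Lindel\"of) the standard fact about the support, whereas the paper's argument is self-contained and exhibits a concrete point of $\overline{A}\cap\operatorname{supp}(\mu)$ as a limit. Both are valid; there is no gap in your reasoning.
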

\begin{proof}
Since $\mu(A)>0$ and the space $E$ is separable, we see that there exists $z_1 \in A$ such that
\mbox{$\mu(A  \cap B_{2^{-1}}(z_1)) >0$}. Now, let us inductively choose a sequence $(z_n)_{n\in\N} \subseteq A$ as follows. Due to separability of the space and $\mu(A \cap B_{2^{-n}}(z_n)) >0$, choose $z_{n+1} \in A \cap B_{2^{-n}}(z_n)$ such that $\mu(A \cap B_{2^{-n-1}}(z_{n+1})) >0$. Thus, for every $m\ge n$ we have
$$
z_m \in  B_{2^{-n}}(z_n)
$$
implying that $(z_n)_{n\in\N}$ is a Cauchy sequence. By the completeness of the space, there exists $z \in \overline{A}$ such that $z_n \to z$ as $n\to \infty$. Furthermore,  $z \in \operatorname{supp}(\mu)$, since for every $\varepsilon >0$ there exists $n \in \N$ such that $B_{2^{-n}}(z_n) \subseteq B_\varepsilon(z)$ and by monotonicity,
$$
\mu(B_\varepsilon(z)) \; \ge \; \mu(B_{2^{-n}}(z_n)) \; > \; \mu(A  \cap  B_{2^{-n}}(z_n))\;  > \;  0.
$$
Hence, $z \in \overline{A} \cap \operatorname{supp}(\mu)$.
\end{proof}

\begin{proof}[Proof of Theorem \ref{Thm:2.4}]
This proof is inspired by some ideas from \cite[proof of Corollary~1.4]{HM11}. Assume for the sake of a contradiction that there existed two distinct invariant probability measures $\mu_1, \mu_2 \in \cP(E)$. Since every invariant probability measure can be written as the convex combination of two ergodic measures (see, e.g.,  \cite[p. 670]{HM11}),  without loss of generality, we may assume $\mu_1, \mu_2$ to be ergodic.

Denote $M:=\|F\|_{\infty}<\infty$. For $i=1,2$, choose $u_i \in E$ such that $u_i \in \operatorname{supp}(\mu_i)$. Set $R:= d(u_1,x_0)  \vee  d(u_2,x_0)$, where $x_0$ is defined in \eqref{LWI}. Furthermore, let $\varepsilon>0$ be such that $M^{-1}>6\varepsilon$ and choose $T:=T(R, \varepsilon)$ as in (\ref{LWI}). Then, by the local weak irreducibility, there exists a coupling $\Gamma$ of $P_T\delta_{u_1}$, $P_T\delta_{u_2}$ with
$$
\Gamma \left( \Delta_\varepsilon \right) \;  > \;   0,
$$
where $\Delta_\varepsilon:=\{(x,y) \in E\times E \colon \;  d(x,y) \le \varepsilon\}$. Denote by $\Delta:=\{(x,x) \in E\times E \colon \; x \in E\}$ the diagonal on the product space.

Due to the fact that $E$ is separable, there exists a countable set $Q \subseteq \Delta$ which lies dense in $\Delta$. Hence,
$$
\union{q \in Q}{} B^{E\times E}_{2\varepsilon}(q) \;  \supseteq \; \Delta_\varepsilon,
$$
where $B^{E\times E}_\alpha((q_1,q_2)):=\{(x,y) \in E\times E\colon \; d(x,q_1)+d(y,q_2) < \alpha \}$ denotes the open ball in $E \times E$ of radius $\alpha$ around $q$ with respect to the sum of the distances in each coordinate. Therefore, there exists
$\tilde{v}_\varepsilon=(v_\varepsilon,v_\varepsilon) \in Q\subseteq \Delta$ such that
$\Gamma(B^{E \times E}_{2\varepsilon}(\tilde{v}_\varepsilon))>0$. Hence
$$
P_T(u_i,B_{2\varepsilon}(v_\varepsilon))  \; = \;   P_T\delta_{u_i}(B_{2\varepsilon}(v_\varepsilon)) \; \ge \; \Gamma(B^{E \times E}_{2\varepsilon}(\tilde{v}_\varepsilon)) \; > \; 0
$$
for every $i=1,2$. Thus, according to Lemma \ref{Lem:1}, there exists $\delta_\varepsilon>0$ such that
\begin{equation}\label{eq:3.6}
\inf_{z\,  \in\, B_{\delta_\varepsilon}(u_i)} \; P_T(z,B_{2\varepsilon}(v_\varepsilon)) \; > \;  0
\end{equation}
for every $i=1,2$. Additionally, since $u_i \in \operatorname{supp}(\mu_i)$, we have
\begin{equation} \label{eq:3.7}
\mu_i \left( B_{\delta_\varepsilon}(u_i) \right)  \; > \; 0
\end{equation}
for every $i=1,2$. Combining (\ref{eq:3.6}) and (\ref{eq:3.7}) and using the fact that $\mu_1, \mu_2$ are invariant for $(P_t)_{t\in \R_+}$, we obtain
\begin{align*}
\mu_i \left( B_{2 \varepsilon}(v_\varepsilon)\right)  \; = \;  \int_E  \, P_T(z, B_{2\varepsilon}(v_\varepsilon)) \;\mu_i(\diff z)  &\ge \;  \int_{B_{\delta_\varepsilon}(u_i)}  \, P_T(z, B_{2\varepsilon}(v_\varepsilon))\; \mu_i(\diff z)\\
& \ge\;  \mu_i\left( B_{\delta_\varepsilon}(u_i)\right) \;  \inf_{z \, \in \, B_{\delta_\varepsilon}(u_i)} \, P_T(z,B_{2\varepsilon}(v_\varepsilon))) \;  >  \; 0.
\end{align*}
Since, by monotonicity, $\mu_i \left( B_{3 \varepsilon}(v_\varepsilon)\right)>0$, Lemma~\ref{Lem:2} directly implies that
$$
B_{3\varepsilon}(v_\varepsilon) \, \cap \,  \operatorname{supp}(\mu_i)  \; \neq  \; \emptyset
$$
for $i=1,2$. This means that there exists $(w_1, w_2) \in \operatorname{supp}(\mu_1)\times \operatorname{supp}(\mu_2)$ such that $w_1, w_2 \in  B_{3\varepsilon}(v_\varepsilon)$. Thus, we finally get
$$
\frac{1}{M} \;  \le \; d(w_1 ,w_2) \; \le \; 6\varepsilon \; < \; \frac{1}{M},
$$
where the first inequality follows from Lemma \ref{Lem}, yielding the desired contradiction.
\end{proof}

\end{document}